\documentclass[11pt,a4paper, reqno]{amsart}

\usepackage{amssymb, amsthm, amsmath, dsfont, lmodern, mathrsfs, graphicx, xspace}

\usepackage[sorting=nyt,backend=biber,style=numeric,isbn=false,alldates=year,url=false,doi=false,eprint=false,maxalphanames=4,minalphanames=3,maxbibnames=99]{biblatex}
\renewbibmacro*{note+pages}{\printfield{note}}
\bibliography{ref}

\newtheorem{theorem}{Theorem}
\newtheorem{lemma}{Lemma}
\newtheorem{remark}{Remark}

\newcommand\R{{\ensuremath {\mathbf R} }}
\newcommand{\supp}{{\rm supp}}
\newcommand\Pro{{\ensuremath {\mathbb P} }}
\newcommand\N{{\ensuremath {\mathbf N} }}
\newcommand\Q{{\ensuremath {\mathbb Q} }}

\newcommand{\cP}{\mathcal{P}}
\newcommand{\eps}{\varepsilon}
\renewcommand{\d}{{\rm d}}
\renewcommand{\tilde}{\widetilde}
\DeclareMathOperator*{\essinf}{ess\,inf}
\newcommand{\clo}{c_{\rm LO}}
\newcommand{\dd}{\text{d}}
\newcommand{\cs}{1 + \frac sd}

\title[Absence of non-compactly supported minimisers for Lieb--Oxford]{Absence of non-compactly supported minimisers for the Lieb--Oxford bound}
\author{Simone Di Marino}
\address{DIMA \& MaLGa, Università di Genova, Genova, Italy}
\email{simone.dimarino@unige.it}
\author{Rodrigue Lelotte}
\address{SAMM, Universit\'e Paris 1 Panth\'eon-Sorbonne, Paris, France}
\email{rodrigue.lelotte@univ-paris1.fr}
\date{\today}

\begin{document}
	\begin{abstract}
		We prove the minimisers of the Lieb-Oxford bound (if any) for a fixed (and finite) number of particles $N \geq 1$ are necessarily compactly supported, extending a result proved by E. H. Lieb and S. Oxford in the one-particle case.  
	\end{abstract}
    \maketitle
    \tableofcontents
	
	\section{Introduction}
	The \emph{Lieb-Oxford inequality}  is a fundamental lower bound for the \textit{indirect part} of the Coulomb or more generally Riesz energy of a system of charged particles \cite{seidl_lieb-oxford_2022, lieb_stability_2009}. It asserts the existence of a constant $\clo(s, d) \geq 0$ depending on the dimension $d$ and a homogeneity parameter $0 < s < d$ such that for all probability measures $\Pro$ on $(\R^d)^N$
	\begin{multline}\label{eq:LO}\tag{LO}
		\int_{\R^{dN}} \sum_{1 \leq i < j \leq N} \frac{1}{|x_i - x_j|^{s}} \d \Pro(x_1, \dots, x_N) - \frac{1}{2}\iint_{\R^d \times \R^d} \frac{\rho_\Pro(x)\rho_\Pro(y)}{|x - y|^s} \dd x \dd y\\
		\geq -\clo(s,d) \int_{\R^d} \rho_\Pro^{1 + \frac{s}{d}}
	\end{multline}
	where $\rho_\Pro$ is the sum of the marginals of $\Pro$ with respect to each copy of $\R^d$ in the Cartesian product $(\R^d)^N$ --- we refer to \eqref{eq:one-particle-density} below for a precise definition of $\rho_\Pro$. Here, it is assumed that $\rho_\Pro$ is absolutely continuous with respect to the Lebesgue measure and that $\rho_\Pro \in L^1 \cap L^{\cs}(\R^d)$. In particular, $\rho_\Pro \geq 0$ and $\int_{\R^d} \rho_\Pro = N$. We emphasise that the constant $\clo(s, d)$ does \textit{not} depend on $N$, which here represents the number of particles. It is therefore said to be \textit{universal} in this sense. The Lieb-Oxford inequality \eqref{eq:LO} can be restated as
	\begin{equation}\label{eq:LO-rho}\tag{LO'}
		\boxed{C(\rho) - \tfrac12 D(\rho) \geq - \clo(s, d) \int_{\R^d} \rho^{\cs}}
	\end{equation}
	where $C(\rho)$ is obtained from the first term on the left-hand side in \eqref{eq:LO} by minimising over all $\Pro$'s such that $\rho_\Pro = \rho$ for a fixed $\rho \geq 0$ with $\int_{\R^d} \rho \in \N^*$— see below in \eqref{eq:MOT} for an exact definition of $C(\rho)$. This term corresponds mathematically to a \textit{multimarginal optimal transport problem}. It physically corresponds to the lowest \textit{Riesz energy} of a system of $N$ particles $x_1, \dots, x_N \in \R^d$ distributed according to $\Pro$ under the constraint that the one-body density $\rho_{\Pro}$ is given by $\rho$. In the quantum setting, $\Pro = |\Psi|^2$ where $\Psi \in L^2(\R^{dN})$ is the underlying wavefunction of system of $N$ particles. The quantity $$D(\rho) :=\iint_{\R^d \times \R^d} \frac{\rho(x)\rho(y)}{|x - y|^s} \dd x \dd y$$ corresponds to the \textit{Hartree energy} of $\rho$, that is the total classical energy associated with a charge distributed in space according to $\rho$.

  	This bound was originally proved by Lieb \cite{lieb_lower_1979} and Lieb and Oxford \cite{lieb_improved_1981} in the case of the Coulomb interaction in dimension $d=3$, that is for the choice $s = 1$. It was later extended to any dimension $d\geq1$ and for all $0 < s < d$. We refer the reader to \cite[Sec. 5.3]{lewinUniversalFunctionalsDensity2023} and references therein. In \eqref{eq:LO}, we stress that we consider $\clo(s, d)$ to be the \underline{smallest possible} constant such that the inequality holds. This inequality is both of theoretical and practical importance, notably in the context of \emph{Density Functional Theory} \cite{engel_density_2011, burke_dft_2013} where it is used to calibrate \textit{exchange-correlation functionals} \cite{levy_tight_1993, perdew_generalized_1996, perdew_lieb-oxford_2022, sun_accurate_2016}. As such, the search for the optimal value of $\clo(s, d)$ has retained a lot of attention since the original work of E.H. Lieb and S. Oxford, see \textit{e.g.} \cite{seidl_challenging_2016, kin-lic_chan_optimized_1999, seidl_lieb-oxford_2022, lewin_improved_2015, lewin_improved_2022}.

	As already stressed above, in the inequality \eqref{eq:LO} the constant $\clo(s, d)$ is universal in the number of particles in the system. One can nevertheless consider the Lieb--Oxford bound for a \textit{fixed} (and \textit{finite}) number of particles $N \in \N^*$ and look for the smallest constant such that \eqref{eq:LO} is verified for all $N$-particle distributions $\Pro \in \cP((\R^d)^N)$. We denote by $\clo(s, d, N)$ this constant -- and later on simply by $\clo$ for shortness if no ambiguity arises. 
	
	In this paper, \textit{we prove that any minimiser (if any) at fixed $N \geq 1$ of the \eqref{eq:LO} bound, that is a density $\rho \in L^1 \cap L^{\cs}(\R^d, \R^+)$ with $\int_{\R^d} \rho = N$ which attains the smallest possible constant $\clo(s, d, N)$ in \eqref{eq:LO-rho} among all such densities, is necessarily compactly supported}. This was already known in the very special case $N = 1$, as proved by Lieb and Oxford \cite{lieb_improved_1981}. A heuristic and mathematically not sound argument showing that this should hold for any number of particles $N\in \N^*$ was given in the chemistry literature, see \cite{seidl_challenging_2016}. Altogether, our main result reads as follows.
	
	\begin{theorem}\label{thm:main-thm}
		Let $0 < s < d$ and let $\rho \in L^1 \cap L^{\cs}(\R^d, \R_+)$ with $\int_{\R^d}\rho = N \geq 1$ be a minimiser (if any) for the Lieb-Oxford bound \eqref{eq:LO-rho} for a fixed number of particles $N \geq 1$. Then, $\rho$ must be compactly supported. 
	\end{theorem}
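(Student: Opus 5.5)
We argue by contradiction and use a perturbation built from cutting the density and rescaling. Suppose $\rho$ is a minimiser, that is, it attains the smallest constant. Equivalently, $\rho$ maximises
\[
\mathcal F(\rho) := \frac{\tfrac12 D(\rho) - C(\rho)}{\int \rho^{\cs}}
\]
among densities of mass $N$, and the maximal value is $\clo = \clo(s,d,N)$. Suppose also that $\rho$ is not compactly supported. The idea follows the $N=1$ argument of Lieb--Oxford, where $C=0$: remove the mass of $\rho$ lying outside a large ball $B_R$ and redistribute it so that the ratio strictly increases. This contradicts maximality.

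\textbf{First step: a dual representation of $C(\rho)$.} We write $C(\rho)$ through multimarginal duality,
\[
C(\rho) = \sup\Big\{ \int u\rho \;:\; \sum_i u(x_i) \le \sum_{i<j} |x_i-x_j|^{-s} \Big\},
\]
and work with an optimal plan $\Pro$. The aim is to split $\Pro$ according to how many particles lie in $B_R^c$. Because the number of particles $N$ is fixed, the mass outside, $m_R := \int_{B_R^c}\rho$, tends to $0$.

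\textbf{Second step: the competitor.} We use the competitor $\rho_R := \rho\mathds 1_{B_R} + m_R\,\delta$-type mass. To keep the density in $L^{\cs}$, the mass $m_R$ is spread uniformly over a far away ball of radius $r$ with $r \to \infty$ independently. We choose the plan to be the restriction of $\Pro$ with the outside particles translated far away. The resulting changes are then estimated one by one.
\begin{itemize}
\item $\int\rho^{\cs}$ decreases by $\int_{B_R^c}\rho^{\cs}$, up to $m_R^{\cs} r^{-s}\to0$.
\item $D$ loses the self term $\iint_{B_R^c\times B_R^c}$ and the cross term. The cross term is exactly matched in $C$, in the limit of far translation, because the particle interactions between in and out also disappear.
\item The out--out pair interaction of the plan is lost, which is nonnegative and so helps.
\end{itemize}
Hence the numerator changes by at least $-\tfrac12 D(\rho\mathds 1_{B_R^c}) + (\text{cross terms cancel up to } o)$.

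\textbf{Third step: comparison via LO for the small piece.} The key inequality to prove is
\[
\tfrac12 D(\rho\mathds 1_{B_R^c}) < \clo \int_{B_R^c}\rho^{\cs}
\]
for suitable large $R$. The cross terms are the problem: $D$'s cross term is $\iint \rho\mathds 1_{B_R}\rho \mathds 1_{B_R^c}/|x-y|^s$, while the plan's in--out interaction is not a product measure. I would handle this by choosing the plan as $\Pro$ conditioned on particle configurations, and using the Lieb--Oxford bound for fixed $N$ applied to conditional sub-densities (fewer particles, $\clo(s,d,k)\le\clo(s,d,N)$, monotone in $k$ by adding far away particles). This step, controlling cross terms and fractional particle numbers, is the main obstacle. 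A first attempt is to instead use the dilation $\rho_\lambda$ on the outside region, exploiting that $D$ scales like $\lambda^{s}$ times $m_R^2$. Since $m_R^2 \ll \int_{B_R^c}\rho^{\cs}$ fails in general, we would choose $R$ along a sequence where $\int_{B_R^c}\rho^{\cs}\ge c\, m_R^{\cs}$ by Hölder-type reverse selection. The resulting bound $m_R^{2-\cs}\to0$ then gives strict improvement, which is the desired contradiction.
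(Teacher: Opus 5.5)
There is a genuine gap. Your second step asserts that the cross terms cancel. They do not, and what is left over has the wrong sign for your purpose.

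Sending the tail $\rho\mathds 1_{B_R^c}$ infinitely far away lowers $\tfrac12 D$ by exactly $\int_{B_R^c}\rho\,U^\rho-\tfrac12 D(\rho\mathds 1_{B_R^c})$, and $U^\rho(y)\ge (N-\eps)|y|^{-s}$ for $|y|$ large. On the other side, take a $c$-conjugate Kantorovich potential $\phi$ of $\rho$. It stays admissible for the new density, so weak duality gives $C(\rho)-C(\rho_R)\le \int_{B_R^c}(\phi-C_\phi)\rho+o(1)$. By Lemma~\ref{lem:up-bnd-asymp}, $\phi-C_\phi=(N-1+o(1))|y|^{-s}$ on the tail of the support. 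In words: a particle at $y$ feels a charge $N$ in the Hartree term but only $N-1$ other particles in the plan.

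Hence $\tfrac12 D-C$ drops by at least $(1-o(1))\int_{B_R^c}\rho(y)|y|^{-s}\,\d y-\tfrac12 D(\rho\mathds 1_{B_R^c})$. Meanwhile $\clo\int\rho^{1+s/d}$ drops only by $\clo\int_{B_R^c}\rho^{1+s/d}$.

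The self-energy is not the issue. By HLS and interpolation, $D(f)\lesssim\|f\|_1^{1-s/d}\int f^{1+s/d}$, so $D(\rho\mathds 1_{B_R^c})=o\big(\int_{B_R^c}\rho^{1+s/d}\big)$ as $m_R\to0$. The inequality you single out as key is therefore easy, and the ``reverse H\"older selection'' is neither justified nor needed. The real obstruction is the term $\int_{B_R^c}\rho(y)|y|^{-s}\,\d y$. For $\rho_R$ to beat $\rho$ you would need $\clo\rho^{s/d}\gtrsim|y|^{-s}$ on the tail in a $\rho$-averaged sense, which fails, for instance, whenever $\rho=o(|y|^{-d})$.

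This is not an artefact of crude bounds. A minimiser satisfies a first-order condition: by minimality and convexity of $C$, $U^\rho-(1+\frac sd)\clo\rho^{s/d}$ is a Kantorovich potential. After the regularity bootstrap and the identification of the additive constant with $C_\phi$, this gives $U^\rho-(1+\frac sd)\clo\rho^{s/d}=\phi-C_\phi$ on $\supp\rho$. Consider the quantity $\tfrac12 D(\eta)-C(\eta)-\clo\int\eta^{1+s/d}$, which is $\le 0$ for all admissible $\eta$ and vanishes at $\rho$. Inserting the stationarity relation into the bounds above shows that, passing from $\rho$ to $\rho_R$, this quantity changes by at most $-\frac sd\,\clo\int_{B_R^c}\rho^{1+s/d}\,(1-o(1))<0$. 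This holds for $R$ large, with the parked mass sent far enough.

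So for every candidate minimiser your competitor is strictly worse. No refinement of it (conditioning on configurations, monotonicity of $\clo(s,d,k)$ in $k$, dilations of the outer part) can produce the contradiction, because pushing tail mass outward is compatible with first-order optimality.

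The missing idea is to use the stationarity relation directly instead of a competitor, as the paper does. Combine it with the asymptotics $\phi=(N-1)|x|^{-s}+C_\phi+o(|x|^{-s})$ and the bound $U^\rho\ge(N-\eps)|x|^{-s}$. Then the exterior inequality $U^\rho\le\phi-C_\phi$ rules out an unbounded complement of $\supp\rho$. Otherwise $(1+\frac sd)\clo\rho^{s/d}\ge(1-\eps)|x|^{-s}$ near infinity, so $\rho\gtrsim|x|^{-d}$ is not integrable. The contradiction comes from integrability, and it is driven precisely by the $N$ versus $N-1$ discrepancy that your second step discarded.
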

	A crucial element in the proof is the asymptotic behaviour of the Kantorovich potential associated to the multimarginal optimal transport problem, which was conjectured in \cite{seidl_strictly_2007, frieseckeStrongInteractionLimitDensity2023} and proved recently in \cite{lelotteAsymptoticsKantorovichPotential2025}. This is recalled below in Lemma~\ref{lem:up-bnd-asymp}. 
	\textbf{Beware that we do not state the existence of a minimiser, which is an open question to the best of our knowledge}. The proof of Theorem~\ref{thm:main-thm} is given in Section~\ref{sec:proof-main-thm}. In Section~\ref{sec:duality-theory}, we recall results on the duality theory for the optimal transport problem with Riesz costs which will be crucial for the proof of our main result. 
\section{Properties of Kantorovich potentials}\label{sec:duality-theory}
The first term in the Lieb-Oxford bound \eqref{eq:LO-rho} corresponds to a multimarginal optimal transport problem. Indeed, we have
\begin{equation}\label{eq:MOT}\tag{OT}
	C(\rho) := \min_{\rho_\Pro = \rho} \left\{\int_{\R^{dN}} c(x_1, \dots, x_N) \d \Pro(x_1, \dots, x_N)\right\}
\end{equation}
where we let 
\begin{equation*}
	c(x_1, \dots, x_N) = \sum_{1 \leq i < j \leq N} \frac{1}{|x_i - x_j|^s}.
\end{equation*}
Here $\rho$ is a density, \textit{i.e.} $\rho \geq 0$ with $\int_{\R^d}\rho = N \in \N^*$ and the minimum above runs over all $N$-particle distributions $\Pro \in \cP((\R^d)^N)$ such that $\rho_\Pro$ is equal to $\rho$, where $\rho_\Pro$ is the sum of the marginals of $\Pro$ with respect to each copy of $\R^d$ in the Cartesian product $(\R^d)^N$:
\begin{equation}\label{eq:one-particle-density}
	\rho_\Pro(x) = \sum_{i = 1}^N \int_{(\R^d)^{N-1}} \Pro(\d x_1, \dots, \d x_{i-1}, x, \d x_{i+1}, \dots, \d x_N).
\end{equation}
In fact, because $c$ is symmetric with respect to the particles, we may without loss of generality restrict our attention to symmetric $\Pro$'s in the problem \eqref{eq:MOT} in which case all these marginals are equal and the constraint simply reads that the first (or in fact, any) marginal of $\Pro$ be given by $\rho/N$. Therefore \eqref{eq:MOT} is a multimarginal optimal transport problem where all the marginals are given by $\rho/N$ and the cost of transportation $c$ is the so-called \textit{Riesz cost}. We refer the reader to \cite{frieseckeStrongInteractionLimitDensity2023} for an extensive survey of this optimal transport problem \eqref{eq:MOT}, or yet \cite{marino_9_2017}. 

Let us now recall useful results regarding the duality theory for the optimal transport problem \eqref{eq:MOT}. It is well-known, see \cite{buttazzo_continuity_2018} and also \cite{colombo_continuity_2019}, that the optimal transport problem admits a dual formulation, called the \textit{Kantorovich dual}, and which reads
\begin{equation}\label{kd}\tag{KD}
	C(\rho) = \sup_{\phi} \, \left\{ \int_{\R^d}\phi \rho  \right\}
\end{equation} 
where the \textit{supremum} runs over all $\rho$-integrable functions $\phi : \R^d \to \R$  such that the inequality
\begin{equation}\label{eq:constraint}
	\sum_{i = 1}^N \phi(x_i) \leq c(x_1, \dots, x_N)
\end{equation}
holds everywhere.  It is proved in \cite{buttazzo_continuity_2018, colombo_continuity_2019} that, under a \textit{small concentration} assumption on the target marginal $\rho$ -- see more precisely \cite[Assumption (A)]{buttazzo_continuity_2018} -- there exists a maximiser for the formulation \eqref{kd}, so-called \textit{Kantorovich potential} (KP). In the case where $\rho$ is absolutely continuous, this assumption is immediately verified. Moreover, it is shown in \cite{buttazzo_continuity_2018} that a KP for \eqref{kd} can be chosen to verify the equation
\begin{equation}\label{eq:c-conj}
	\phi(x) = \inf_{x_2, \dots, x_N \in \R^d} \left\{c(x, x_2,\dots, x_N) - \sum_{i = 2}^N \phi(x_i)\right\}.
\end{equation}
Such a potential is said to be \textit{$c$-conjugated}. Furthermore, it is proved in \cite{lelotteExternalDualCharge2024} that KPs for \eqref{kd} are in fact unique up to additive constant on the connected components of the target density $\rho$ and necessarily Lipschitz-continuous on the support of $\rho$.

In this paper, \textbf{we will work with a slightly different formulation of the Kantorovich duality \eqref{kd}.} First, we relax the inequality \eqref{eq:constraint} and require that it be verified only \textit{almost} everywhere with respect to the Lebesgue measure. In this direction, let us define
\begin{equation}\label{eq:E}
	E(\phi) := \essinf_{x_1, \dots, x_N \in \R^d} \left\{ c(x_1, \dots, x_N) - \sum_{i = 1}^N \phi(x_i) \right\}
\end{equation}
where the essential infimum is understood with respect to the Lebesgue measure. Physically, $E(\phi)$ essentially corresponds to the minimal energy that can be reached by a system of $N$ classical particles which interact through the Riesz interaction potential and that are subjected to an external potential given by \textit{minus} $\phi$.  Then, remark that for \textit{any} function $\phi$ (even defined only almost everywhere) then $\phi + \frac{E(\phi)}{N}$ verifies the constraint \eqref{eq:constraint} almost everywhere. We then claim that the Kantorovich duality \eqref{kd} can be reformulated as
\begin{equation}\label{kd-2}\tag{KD'}
	C(\rho) = \sup_{\phi \in \mathcal I_\rho}\left\{\int_{\R^d} \phi \rho + E(\phi)\right\}
\end{equation}
where we let $\mathcal{I}_\rho := L^1_{loc}(\R^d) \, \cap \, L^1(\rho)$. \textbf{We extend the notion of \textit{Kantorovich potential} to mean any $\phi \in \mathcal{I}_\rho$ which maximises the above problem \eqref{kd-2}.} The fact that \eqref{kd} and \eqref{kd-2} are equivalent is rather trivial if one restricts the supremum in \eqref{kd-2} to run as in \eqref{kd} over the set of $\rho$-integrable functions $\phi : \R^d \to \R$ and if we substitute an infimum to the essential infimum in \eqref{eq:E}. Nevertheless, we emphasise that in \eqref{kd-2} we work with KPs which are elements of a Lebesgue space and thus only defined \textit{a priori} almost everywhere, whereas in the standard formulation of the Kantorovich duality \eqref{kd} KPs are assumed implicitly from the start to be defined everywhere. The reason to distinguish at first these two settings in that in our proof of Theorem~\ref{thm:main-thm}, our candidate potential (denoted there $\phi_\rho$) will arise from a variational condition which places it naturally in a Lebesgue space. The fact that the non-standard strong Kantorovich duality \eqref{kd-2} holds is discussed and proved in Appendix~\ref{app:extension-buttazo}. \textit{A posteriori}, the distinction between these two settings may seem of no use since we have the following result:
\begin{theorem}\label{thm:summary}
	Let $\rho \in L^1(\R^d, \R_+)$ with $\int_{\R^d} \rho = N$. Then:
	\begin{enumerate}
		\item\label{item:existence} There exists at least one maximiser $\phi \in \mathcal{I}_\rho$ to the Kantorovich dual \eqref{kd-2} -- a so-called Kantorovich potential. Moreover, $\phi$ can be chosen (representable as a function which is) $c-$conjugated, Lipschitz-continuous and also uniformly bounded on $\R^d$.
        \item If $\psi \in \mathcal{I}_\rho$ is any Kantorovich potential for \eqref{kd-2}, then there exists another Kantorovich potential $\phi$  (representable as a function which is) $c-$conjugated, Lipschitz continuous and uniformly bounded so that 
        $$
        \psi + \frac{E(\psi)}{N} \leq \phi
        $$
        where the inequality is understood almost everywhere. Furthermore, by optimality it holds that $\psi + \frac{E(\psi)}{N} = \phi$ almost everywhere on the support of $\rho$. 
	\end{enumerate}
\end{theorem}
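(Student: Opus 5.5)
The plan is to prove both items together through a $c$-conjugation procedure carried out at the level of a.e.-defined functions. I take as given the strong duality \eqref{kd-2}, proved in the appendix, and the classical facts from \cite{buttazzo_continuity_2018, colombo_continuity_2019, lelotteExternalDualCharge2024} about everywhere-defined potentials in \eqref{kd}.

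\emph{Step 1: comparing the two dual problems.} Let $\psi \in \mathcal I_\rho$ and set $\tilde\psi := \psi + E(\psi)/N$. Then $E(\tilde\psi) = 0$, so $\sum_i \tilde\psi(x_i) \le c(x_1,\dots,x_N)$ holds for Lebesgue-a.e. $(x_1,\dots,x_N)$. The functional $\int\phi\rho + E(\phi)$ is invariant under adding constants, so $\tilde\psi$ has the same value as $\psi$.

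\emph{Step 2: an a.e.\ $c$-transform.} I define the everywhere-defined function
\[
\phi_1(x) := \essinf_{x_2,\dots,x_N} \Big\{ c(x,x_2,\dots,x_N) - \sum_{i\ge2}\tilde\psi(x_i)\Big\}.
\]
By Fubini, the a.e.\ constraint gives $\tilde\psi \le \phi_1$ almost everywhere. Because $c$ is continuous off the diagonals and the essinf is taken only in the other variables, $\phi_1$ is upper semicontinuous in $x$. I then iterate the transform, as in the construction of $c$-conjugate pairs in \cite{buttazzo_continuity_2018}. The goal is a function $\phi$ that satisfies \eqref{eq:constraint} everywhere, satisfies $\phi \ge \tilde\psi$ a.e., and is $c$-conjugated in the sense of \eqref{eq:c-conj}.

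The main obstacle is passing from the a.e.\ constraint to the everywhere constraint. My approach is the following.
\begin{itemize}
\item First, mollify or use Lebesgue points: replace $\tilde\psi$ by its upper semicontinuous envelope computed through essential limsups. Since the cost is continuous on the complement of the diagonals, an inequality that holds a.e.\ extends to all points off the diagonals for this envelope.
\item On the diagonals $c=+\infty$, so the constraint there is automatic.
\item Once an everywhere admissible potential $\ge\tilde\psi$ a.e.\ is available, the standard $c$-conjugation of \cite{buttazzo_continuity_2018} only increases the function. It yields a $c$-conjugated potential.
\item For this conjugated potential I invoke the known results: Lipschitz continuity from \cite{lelotteExternalDualCharge2024} and uniform boundedness from \cite{buttazzo_continuity_2018, colombo_continuity_2019}. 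The latter uses that $\rho$ is absolutely continuous, so Assumption (A) holds.
\end{itemize}

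\emph{Step 3: optimality and item \eqref{item:existence}.} Since $\phi \ge \tilde\psi$ a.e.\ and $E(\phi) \ge 0$, we get
\[
C(\rho) \ge \int\phi\rho + E(\phi) \ge \int\tilde\psi\rho + E(\tilde\psi) = \int\psi\rho + E(\psi).
\]
If $\psi$ is a Kantorovich potential, all these inequalities are equalities, so $\phi$ is also a maximiser. Moreover $\int(\phi-\tilde\psi)\rho = 0$ with $\phi - \tilde\psi \ge 0$, which forces $\phi = \tilde\psi$ $\rho$-a.e. This is item (2). (Equality also forces $E(\phi)=0$.)

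For item \eqref{item:existence}, existence of a maximiser of \eqref{kd-2} follows from the existence of an everywhere-defined Kantorovich potential for \eqref{kd} with the stated regularity, combined with the equality of the values of \eqref{kd} and \eqref{kd-2} from the appendix. Such a potential is admissible in \eqref{kd-2}: it lies in $\mathcal I_\rho$ because it is bounded and Lipschitz, and $E\ge0$. It therefore attains the supremum.
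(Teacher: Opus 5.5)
Your proof is correct and has the same skeleton as the paper's. You normalise to $\tilde\psi=\psi+E(\psi)/N$, produce an everywhere-admissible potential dominating $\tilde\psi$ a.e., apply the $c$-conjugation and boundedness results of \cite{buttazzo_continuity_2018} and the truncation argument of \cite{lelotteExternalDualCharge2024}, and conclude by optimality. Item (1) is handled exactly as in the paper.

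The genuine difference is the a.e.-to-everywhere step. The paper reuses the construction from its proof of \eqref{kd-2}: successive mixed $\inf/\essinf$ transforms $\phi^1,\dots,\phi^N$, followed by averaging, which uses the symmetry of $c$. You instead take the essential upper envelope $\psi^*(x):=\lim_{\delta\to0}\operatorname{ess\,sup}_{B_\delta(x)}\tilde\psi$, and this works:
\begin{itemize}
\item $\psi^*\ge\tilde\psi$ at every Lebesgue point.
\item For an off-diagonal tuple, choose disjoint balls $B_\delta(x_i)$ on whose product $c\le c(x_1,\dots,x_N)+\eps$.
\item An inequality $\sum_i\tilde\psi(y_i)\le M$ holding a.e.\ on this product forces $\sum_i\operatorname{ess\,sup}_{B_\delta(x_i)}\tilde\psi\le M$. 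To see this, test it on the positive-measure product of near-supremum sets.
\item Then let $\delta\to0$.
\end{itemize}

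Your route is shorter and isolates exactly what is used about the cost: continuity off the diagonals and $c=+\infty$ on them. It is also no less general than the paper's iteration. That iteration looks cost-agnostic, but its step ``$\phi\le\phi^2$ a.e.'' needs the exceptional $x_2$-null set to be uniform over uncountably many $x_1$. This can fail if $c$ is altered on a null set: for $N=2$, $\phi=0$, $c=0$ off the diagonal and $c=-1$ on it, one gets $\phi^2\equiv-1$. For the Riesz cost, that step is justified by an envelope argument like yours. What the paper's construction does supply is the $\rho$-integrability bookkeeping.

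Three points to tidy up.
\begin{itemize}
\item Drop ``mollify'': a convolution preserves neither the exact constraint nor the bound $\ge\tilde\psi$. Drop also the preliminary transform $\phi_1$ and the ``iterate'' step, which the envelope makes superfluous.
\item Check $\psi^*\in L^1(\rho)$ directly. The negative part is controlled by $\tilde\psi$. The positive part is controlled by the constraint at a fixed off-diagonal tuple $(x_2,\dots,x_N)$ where $\psi^*$ and $U^\rho$ are finite.
\item Run your Step~3 optimality argument on $\psi^*$ first, so that it becomes a Kantorovich potential for \eqref{kd}, and only then invoke \cite{buttazzo_continuity_2018}. As written, you obtain $\phi\in\mathcal I_\rho$ from boundedness and prove optimality afterwards, whereas the paper quotes the boundedness theorem for $c$-conjugated Kantorovich potentials.
\end{itemize}
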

The proof of Theorem~\ref{thm:summary} is given in Appendix~\ref{app:extension-buttazo}. We now show in the next lemma that the Kantorovich potentials for the dual problem \eqref{kd-2} are characterised in a variational way as follows.
\begin{lemma}\label{lem:subgradient}
Let $\phi \in \mathcal{I}_\rho$ where we recall that $\mathcal{I}_\rho = L^1_{loc}(\R^d) \, \cap \, L^1(\rho)$. If for all compactly supported $\eta \in L^\infty(\R^d)$ it holds that
\begin{equation}\label{eq:subdifferential}
	C(\eta) \geq C(\rho) + \int_{\R^d} \phi \, \d (\eta - \rho),
\end{equation}
then $\phi$ is a Kantorovich potential for \eqref{kd-2}.
\end{lemma}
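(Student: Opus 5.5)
The plan is to prove the inequality $\int_{\R^d}\phi\rho + E(\phi) \geq C(\rho)$. The reverse inequality holds automatically, because by \eqref{kd-2} (proved in Appendix~\ref{app:extension-buttazo}) the supremum over $\mathcal I_\rho$ equals $C(\rho)$. So $\phi$ attains the supremum, which is what the lemma asserts. Rearranging \eqref{eq:subdifferential}, the hypothesis reads
\begin{equation*}
C(\rho) - \int_{\R^d}\phi\rho \;\leq\; \inf_{\eta}\Big\{ C(\eta) - \int_{\R^d}\phi\,\eta\Big\},
\end{equation*}
where $\eta$ ranges over compactly supported bounded densities. It therefore suffices to show that this infimum is at most $E(\phi)$. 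To do so, we test with densities $\eta$ concentrated near an almost-minimising configuration.

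\textbf{Choice of the configuration.} Fix $\eps>0$ and a level $t > E(\phi)$ with $t$ finite. By definition of the essential infimum, the set
\begin{equation*}
A=\Big\{(x_1,\dots,x_N) : c(x_1,\dots,x_N)-\sum_i\phi(x_i)<t\Big\}
\end{equation*}
has positive Lebesgue measure in $\R^{dN}$. Let $L\subset\R^d$ be the set of Lebesgue points of $\phi$; this uses $\phi\in L^1_{loc}$, and $\R^d\setminus L$ is null. The set $\{x : x_i\notin L \text{ for some } i\}$ is a finite union of sets of the form (null set)$\times\R^{d(N-1)}$, so it is null. The diagonal $\{x_i=x_j \text{ for some } i\neq j\}$ is also null. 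Hence we can pick $(x_1,\dots,x_N)\in A$ with pairwise distinct coordinates, each of which is a Lebesgue point of $\phi$.

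\textbf{Test density.} For small $r>0$, set
\begin{equation*}
\eta_r=\sum_{i=1}^N |B_r|^{-1}\mathbf 1_{B_r(x_i)}.
\end{equation*}
This function is bounded, compactly supported and has mass $N$. Let $\Pro_r$ be the symmetrisation of the product $\bigotimes_i |B_r|^{-1}\mathbf 1_{B_r(x_i)}\,\d x$. Then $\rho_{\Pro_r}=\eta_r$, so $\Pro_r$ is admissible and
\begin{equation*}
C(\eta_r)\leq \int c\,\d\Pro_r \leq \sup_{y_i\in B_r(x_i)} c(y_1,\dots,y_N).
\end{equation*}
Because the points $x_i$ are distinct, $c$ is continuous near the configuration, and this upper bound tends to $c(x_1,\dots,x_N)$ as $r\to0$. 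On the other side, $\int\phi\,\eta_r=\sum_i \frac{1}{|B_r|}\int_{B_r(x_i)}\phi$, which tends to $\sum_i\phi(x_i)$ by the Lebesgue point property. Hence
\begin{equation*}
C(\rho)-\int_{\R^d}\phi\rho\leq c(x_1,\dots,x_N)-\sum_i\phi(x_i)<t.
\end{equation*}
Letting $t\downarrow E(\phi)$ gives $C(\rho)-\int\phi\rho\leq E(\phi)$, which is the required inequality.

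\textbf{Remaining point and main subtlety.} If $E(\phi)=-\infty$, the same argument with $t\to-\infty$ would force $C(\rho)=-\infty$, which is absurd since $c\geq0$. So $E(\phi)$ is finite, and $\phi$ indeed maximises \eqref{kd-2}. The only delicate step is choosing a configuration that is simultaneously near-optimal in the \emph{essential} sense, off the diagonal, and made of Lebesgue points. This is handled by the measure-zero bookkeeping above; everything else reduces to continuity of $c$ away from collisions and to weak duality, both of which are already available.
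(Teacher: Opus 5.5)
Your proposal is correct and follows essentially the same argument as the paper: you test \eqref{eq:subdifferential} with the normalised indicator densities $\eta_r$ around an off-diagonal configuration, bound $C(\eta_r)$ via the symmetrised product plan, and use Lebesgue differentiation for $\int\phi\,\eta_r$. The only cosmetic difference is that you pick a good configuration inside a positive-measure sublevel set, whereas the paper proves the pointwise inequality for almost every configuration and then takes the essential infimum; your explicit appeal to \eqref{kd-2} for the reverse inequality and your finiteness check on $E(\phi)$ simply spell out what the paper leaves implicit.
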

\begin{proof}[Proof of Lemma~\ref{lem:subgradient}]	
	Let $\phi \in \mathcal{I}_\rho$ verify the inequality \eqref{eq:subdifferential}. Let $x_1, \dots, x_N \in \R^d$, and assume that $x_i \neq x_j$ for all $i \neq j$. Let then $0 < \eps < \min_{i \neq j} |x_i - x_j|$, and let us define 
	\begin{equation}
		\eta_\eps := |B_\eps|^{-1}\sum_{i = 1}^N \mathds{1}_{B_\eps(x_i)},
	\end{equation}
	where $B_\eps(x_i)$ is the ball of $\R^d$ centred at $x_i$ and of radius $\eps$ and $|B_\eps|$ is the volume of such a ball. Now, let us define the trial state
	\begin{equation}
		\Q_\eps := \frac{1}{N!} \sum_{\sigma \in \mathfrak{S}_N} \frac{\mathds{1}_{B_\eps(x_{\sigma(1)})}}{|B_\eps|} \otimes \cdots \otimes \frac{\mathds{1}_{B_\eps(x_{\sigma(N)})}}{|B_\eps|}.
	\end{equation}
	It is seen that the one-body density of the $N$-particle distribution $\Q_\eps$ verifies $\rho_{\Q_\eps} = \eta_\eps$. By definition of the optimal transport problem, it holds that
	\begin{equation}
		C(\eta_\eps) \leq \int_{\R^{dN}} c(x_1, \dots, x_N) \d \Q_\eps
	\end{equation}
    By plugging $\eta_\eps$ into \eqref{eq:subdifferential}, we obtain that 
	\begin{equation}\label{eq:ineq-eta-eps}
		C(\rho) \leq C(\eta_\eps) - \int_{\R^d} \phi \eta_\eps + \int_{\R^d} \phi \rho.
	\end{equation}
	By continuity of the Riesz cost $c$ away from the diagonals, we have
	\begin{equation}
		\limsup_{\eps \to 0} C(\eta_\eps) \leq c(x_1, \dots, x_N).
	\end{equation}
	 Furthermore, by local integrability of $\phi$, the standard Lebesgue differentiation theorem yields
	\begin{equation}
		\int_{\R^d} \phi \eta_\eps \xrightarrow[\eps \to 0]{} \sum_{i = 1}^N \phi(x_i)
	\end{equation}
	for almost all $x_1, \dots, x_N \in \R^d$. Therefore, we obtain letting $\eps \to 0$ in the inequality \eqref{eq:ineq-eta-eps} that
	\begin{equation}
		C(\rho) \leq c(x_1, \dots, x_N) - \sum_{i = 1}^N \phi(x_i) + \int_{\R^d} \phi \rho
	\end{equation}
	for almost all $x_1, \dots, x_N \in \R^d$ (distinct from one another). Therefore, taking the essential infimum with respect to $x_1, \dots, x_N \in \R^d$, we obtain
	\begin{equation}
		C(\rho) \leq E(\phi) + \int_{\R^d} \phi \rho,
	\end{equation}
	hence $\phi$ is a KP and the proof of Lemma~\ref{lem:subgradient} is over.
\end{proof} 

\begin{remark}
	The optimal transport functional $\rho \mapsto C(\rho)$ is convex and lower semicontinuous --- as a functional from the set of positive and finite measures with fixed integer mass $N$. We can thus define its subdifferential at any $\rho$ as the set of \textbf{continuous} (and say bounded) functions $\phi$ such that precisely \eqref{eq:subdifferential} holds for every measure $\eta$ so that $\eta \geq 0$ and $\int_{\R^d} \eta = N \in \N^*$. So, in essence, Lemma~\ref{lem:subgradient} exactly says that continuous (and bounded) subgradients of the optimal transport functional are KPs. Nevertheless, we do not use this formalism since we want to consider KP which are to start with in the Lebesgue space $\mathcal{I}_\rho$ and therefore not necessarily continuous. We recall that the equivalence between subgradients of the optimal transport functional and the corresponding Kantorovich potentials is well-known, see e.g. \cite[Prop. 7.17]{santambrogio_optimal_2015}.
\end{remark}

A crucial ingredient in the proof of our main theorem will be the asymptotic behaviour of KPs:
\begin{lemma}\label{lem:up-bnd-asymp}
Let $\rho \in L^1(\R^d, \R_+)$ with $\int_{\R^d} \rho = N$, and let $\phi$ be any $c$-conjugated Kantorovich potential for the Kantorovich dual \eqref{kd}. Then, $\phi$ admits a limit $C_\phi$ everywhere at infinity and moreover $\phi \geq C_\phi$ everywhere. If furthermore $\rho$ has unbounded support, then we have the asymptotics
    \begin{equation}\label{eq:asymp}
		\phi(x) = \frac{N-1}{|x|^s} + C_\phi +o\Big(\frac{1}{|x|^s}\Big)
	\end{equation}
	in the limit $x \to \infty$ for $x$ inside the support of $\rho$.
\end{lemma}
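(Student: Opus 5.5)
The plan is to get both statements from the $c$-conjugation formula \eqref{eq:c-conj}. Throughout, $\phi$ is a $c$-conjugated Kantorovich potential, taken bounded and Lipschitz as allowed by Theorem~\ref{thm:summary}. For $y=(y_2,\dots,y_N)$ write
\[
c(x,y_2,\dots,y_N) = c_{N-1}(y) + \sum_{i=2}^N |x-y_i|^{-s},
\]
where $c_{N-1}$ is the Riesz cost of the $N-1$ remaining points. I will define the candidate limit as
\[
C_\phi := \inf_{y \in (\R^d)^{N-1}} \Big\{ c_{N-1}(y) - \sum_{i=2}^N \phi(y_i) \Big\}.
\]
This is finite because $\phi$ is bounded and $c_{N-1} \geq 0$. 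For $N=1$ the interaction is empty, $\phi$ is constant, and the claims are trivial.

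\emph{Step 1: limit at infinity and $\phi \geq C_\phi$.} The interaction terms $|x-y_i|^{-s}$ are positive, so \eqref{eq:c-conj} gives $\phi(x) \geq C_\phi$ for every $x$. For the reverse inequality, fix $\delta>0$ and a configuration $y^\delta$ that is $\delta$-optimal for $C_\phi$. Then
\[
\phi(x) \leq C_\phi + \delta + \sum_{i=2}^N |x-y^\delta_i|^{-s}.
\]
The last sum tends to $0$ as $|x| \to \infty$, so $\limsup_{|x|\to\infty}\phi \leq C_\phi+\delta$. Since $\delta$ is arbitrary, $\phi(x) \to C_\phi$ as $|x| \to \infty$.

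\emph{Step 2: upper bound in \eqref{eq:asymp}.} To get $\phi(x) \leq C_\phi + \frac{N-1}{|x|^s}(1+o(1))$, the previous argument needs an exact minimiser, or a family of near-minimisers with $\delta = o(|x|^{-s})$ staying in a fixed bounded set. I would first show that $C_\phi$ is attained at a bounded configuration. Take a minimising sequence and suppose a cluster of points escapes to infinity. By Step~1, those points contribute $\phi \approx C_\phi$ each, while interaction between far clusters is negligible. Comparing with the analogous $(N-1-k)$-point problems should give a contradiction. I expect an induction on the number of points to be needed here, using that $\phi$ is $c$-conjugated, hence satisfies the $(N-1)$-point version of the same structure. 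Given a bounded minimiser $\bar y$, we get $\phi(x) \leq C_\phi + \sum_i |x-\bar y_i|^{-s} = C_\phi + \frac{N-1}{|x|^s} + o(|x|^{-s})$ for all large $x$.

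\emph{Step 3: lower bound on the support.} This is where $x\in\supp\rho$ and the unbounded support of $\rho$ enter. For $x$ in the support of $\rho$, I would take an optimal plan $\Pro$ and use that $\phi(x_1)+\dots+\phi(x_N)=c(x_1,\dots,x_N)$ on $\supp \Pro$; by Lipschitz continuity of $\phi$ this extends to the closure. This gives partners $y(x)$ with
\[
\phi(x) = c_{N-1}(y(x)) - \sum_i \phi(y_i(x)) + \sum_i |x-y_i(x)|^{-s} \geq C_\phi + \sum_i |x-y_i(x)|^{-s}.
\]
The remaining task is to show that $y(x)$ stays in a bounded set as $x \to \infty$ in $\supp\rho$. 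Then $\sum_i|x-y_i(x)|^{-s} = \frac{N-1}{|x|^s}(1+o(1))$, which matches Step~2.

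\emph{Main obstacle.} I expect the hardest step to be confining the partners $y(x)$ in Step~3, together with the attainment in Step~2. My first attempt would combine the equality $\phi(x) = c(x,y(x)) - \sum_i\phi(y_i(x))$ with the upper bound of Step~2. This forces
\[
\Big[c_{N-1}(y(x)) - \sum_i \phi(y_i(x)) - C_\phi\Big] + \sum_i |x-y_i(x)|^{-s} \leq \frac{N-1}{|x|^s}(1+o(1)).
\]
If some partner $y_i(x)$ were also far away, I would exploit the strict positivity of the interaction between it and the remaining bounded cluster: moving it back toward the minimising configuration should strictly lower the bracket at order much larger than $o(|x|^{-s})$. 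Making this quantitative is the delicate part.
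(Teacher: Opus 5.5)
Your Step~1 is correct, and it is more self-contained than the paper, which cites \cite{lelotteExternalDualCharge2024} for both the existence of the limit and the formula $C_\phi=\inf\{c_{N-1}-\sum\phi\}$. Your $\delta$-optimal configuration argument gives both at once. Its only input is $\sup\phi<\infty$, which holds for every $c$-conjugated KP by \cite[Thm.~3.4]{buttazzo_continuity_2018}, not just for the one produced by Theorem~\ref{thm:summary}. The genuine gap is that Steps~2 and~3 rest on two claims you leave unproved:
(i) $C_\phi$ is attained at a bounded $(N-1)$-point configuration;
(ii) the optimal partners $y(x)$ of points $x\to\infty$ in $\supp\rho$ stay in a bounded set.
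Together these are the whole content of the asymptotics \eqref{eq:asymp}. The paper does not prove them either: it imports them from \cite[Thm.~1]{lelotteAsymptoticsKantorovichPotential2025}, together with \cite[Rem.~4]{lelotteAsymptoticsKantorovichPotential2025} for non-connected supports.

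The mechanism you sketch for (i) also does not close.
\begin{itemize}
\item $c$-conjugacy alone cannot suffice. The function $\phi\equiv0$ is $c$-conjugated, bounded and Lipschitz, yet for $N\geq3$ its $C_\phi=\inf c_{N-1}=0$ is not attained. So optimality of $\phi$ for $\rho$ (e.g.\ $C_\phi<0$) must enter.
\item Comparing with smaller clusters yields no contradiction. Set $C^{(0)}:=0$ and $C^{(k)}:=\inf_{(\R^d)^k}\{c_k-\sum_{i\le k}\phi(y_i)\}$. A minimising sequence for $C_\phi$ whose bounded part has $a$ points gives $C^{(a)}\le (N-a)C_\phi$. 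Adding $N-1-a$ mutually distant points at infinity to a near-minimiser of $C^{(a)}$ gives $C^{(a)}\ge (N-a)C_\phi$. So you only obtain the identity $C^{(a)}=(N-a)C_\phi$. For $a=0$ this contradicts $C_\phi<0$, but for $1\le a\le N-2$ it does not.
\end{itemize}
To exclude partial escape along these lines, you need a strict gain from bringing a point back. That means a point $z$ with $\phi(z)-C_\phi>\sum_{i\in A}|z-\bar y_i|^{-s}$, a quantity of order $a|z|^{-s}$. This is information of the same nature as the lower half of \eqref{eq:asymp}, which is what you are trying to prove. Your quantitative plan for (ii) has the same circularity, as you note yourself. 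As written, the proposal reduces the lemma to the cited theorem rather than proving it.
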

\begin{proof}
    The existence of a well-defined limit at infinity is proved in \cite{lelotteExternalDualCharge2024}. It is furthermore shown there that the limit $C_\phi$ is negative and that
	\begin{equation}
		C_\phi = \inf_{x_1, \dots, x_{N-1} \in \R^d} \left\{c(x_1, \dots, x_{N-1}) - \sum_{i = 1}^{N-1} \phi(x_i)\right\}.
		\end{equation} 
    The fact that $\phi \geq C_\phi$ follows immediately from the non-negativity of the Riesz potential, so that 
    $$
        c(x, x_2, \dots, x_N) - \sum_{i = 2}^N \phi(x_i) \geq c(x_2, \dots, x_N) - \sum_{i = 2}^N \phi(x_i) 
    $$
        for all $x, x_2, \dots, x_N \in \R^d$ and thus the claim by taking the infimum with respect to $x_2, \dots, x_N \in \R^d$. As for the asymptotics \eqref{eq:asymp}, it is proved in \cite[Thm. 1]{lelotteAsymptoticsKantorovichPotential2025} --- in the case where the support of $\rho$ is unbounded and connected, but it remains valid for any target density $\rho$ with unbounded support for $c$-conjugated KPs, as noticed in \cite[Rem. 4]{lelotteAsymptoticsKantorovichPotential2025}. 
\end{proof}

\section{Proof of main theorem}\label{sec:proof-main-thm}
This section is dedicated to the proof of our main theorem, that is  Theorem~\ref{thm:main-thm}. Let $\rho \in L^1\cap L^{\cs}(\R^d)$ be a minimising density for the Lieb-Oxford bound \eqref{eq:LO-rho} at \textit{fixed} number of particles $N \in \N^*$, so that $\int_{\R^d} \rho = N$. Then, $\rho$ is \textit{a fortiori} a minimiser of the functional 
$$
F(\eta) := C(\eta) -\tfrac12 D(\eta) + \clo(s,d,N) \int_{\R^d} \eta^{\cs}.
$$
Hereafter, we write $\clo = \clo(s,d,N)$. In particular for any $\eta \in L^1 \cap L^{\cs}(\R^d)$ with $\int_{\R^d} \eta = N$ and all $\eps > 0$ it must be that $F(\rho + \eps (\eta - \rho)) \geq F(\rho) = 0$. The Hartree functional $\eta \mapsto \frac12 D(\eta)$ is differentiable on $L^1 \cap L^{\cs}(\R^d)$ and its functional derivative at a density $\eta$ is given by the Riesz potential $U^\eta$ generated by $\eta$, that is
\begin{equation}
	U^\eta(x) := \eta \ast |x|^{-s} = \int_{\R^d} \frac{\eta(y)}{|x - y|^s}\dd y.
\end{equation}
Furthermore, the local term $\eta \mapsto \int_{\R^d} \eta^{\cs}$ is also differentiable and its functional derivative at $\eta$ is given by $(\cs)\eta^{\frac sd}$. The optimal transport term $\eta \mapsto C(\eta)$ is \textit{not} differentiable in the usual way, but it is nevertheless convex with respect to $\eta$, so that we obtain
\begin{equation}\label{eq:subdiff}
	C(\eta) \geq C(\rho) + \int_{\R^d}(U^\rho - \beta_s\clo \rho^{\frac sd}) \dd (\eta - \rho)
\end{equation}
where we let $\beta_s := \cs$ for shortness. Let us write $$\phi_\rho := U^\rho - \beta_s \clo \rho^{\frac sd}.$$ Then, since the Riesz potential $U^\rho$ and $\rho^{\frac sd}$ are locally integrable, the function $\phi_\rho$ is also locally integrable. Therefore, we conclude from \eqref{eq:subdiff} that $\phi_\rho$ is a KP according to Lemma~\ref{lem:subgradient}. According to the second item of Theorem~\ref{thm:summary}, let $\phi$ be a $c$-conjugated and Lipschitz-continuous KP such that
  \begin{equation}\label{eq:ext-est-bis}
  	\phi_\rho + \frac{E(\phi_\rho)}{N} \leq \phi.
  \end{equation} 
  almost everywhere and such that (by optimality) we have equality almost everywhere on the support of $\rho$. In particular, this means that $\phi_\rho$ is (representable as) a Lipschitz-continuous function on the support of $\rho$. Furthermore, by continuity of $U^\rho$ outside of the support of $\rho$ (as follows readily from dominated convergence) we also have that $\phi_\rho$ is continuous there. We emphasise that for the moment, it is unknown whether or not the KP $\phi_\rho$ is actually continuous everywhere, that is whether or not it is (representable as a) continuous (function) across the boundary of its support of $\rho$

Let us write $c := \frac{E(\phi_\rho)}{N}$, so that we have
\begin{align}
		&U^\rho + c \leq \phi \quad \text{on} \,\, \R^d \setminus \supp(\rho) \label{eq:ext-est}\\
		&\phi_\rho + c = \phi \quad \text{on} \,\, \supp(\rho) \label{eq:int-est}
\end{align}
where the exterior estimate \eqref{eq:ext-est} is a direct consequence of \eqref{eq:ext-est-bis}.
From these optimality conditions, we actually have more regularity on our objects. First, let us prove the following lemma.
\begin{lemma}\label{lem:reg}
It holds that 
\begin{enumerate}
    \item \label{item:Urhocont} the Riesz potential $U^\rho$ is continuous everywhere on $\R^d$;
    \item \label{item:rhocont} the density $\rho$ is (representable as) a continuous function on $\R^d$ and vanishes on the boundary of its support.
\end{enumerate}
	
\end{lemma}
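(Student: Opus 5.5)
The plan is to combine the interior identity \eqref{eq:int-est}, which says $U^\rho - \beta_s\clo\rho^{\frac sd} = \phi - c$ a.e. on $\supp(\rho)$, with the fact that $\phi$ is Lipschitz and bounded (Theorem~\ref{thm:summary}). Both items should follow from this identity.

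\textbf{Step 1: $\rho$ is bounded.} From \eqref{eq:int-est} we get, a.e. on $\supp(\rho)$,
\[
\beta_s\clo\,\rho^{\frac sd} = U^\rho - \phi + c .
\]
Since $\phi$ is bounded, it suffices to bound $U^\rho$ from above. For this we split the kernel, writing $|x|^{-s}\mathds 1_{|x|<1} \in L^{q}$ for every $q<d/s$, and $|x|^{-s}\mathds 1_{|x|\geq 1}\in L^\infty$. Since $\rho\in L^1\cap L^{\cs}$, Young's inequality gives $U^\rho\in L^\infty$ as soon as the conjugate exponent $p$ of some $q<d/s$ satisfies $p\le \cs$. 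This fails in general, because $p>d/(d-s)$ is needed and $\cs$ may be smaller.

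So we bootstrap. If $\rho\in L^{p}$, then $U^\rho\in L^{r}$ with $1/r = 1/p - (d-s)/d$ by Hardy–Littlewood–Sobolev (plus the $L^\infty$ far part). The identity then gives $\rho = (\ldots)^{d/s}\in L^{rs/d}$, up to bounded terms restricted to $\supp\rho$. Starting from $p_0=\cs$, we check that $p\mapsto rs/d$ strictly increases the integrability exponent with a gap bounded below, so that finitely many steps reach $p> d/(d-s)$ and hence $U^\rho\in L^\infty$. This gives $\rho\in L^\infty$.

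I expect this bootstrap to be the main obstacle. The key is the exponent arithmetic: one needs $\frac{s}{d}\cdot\frac{1}{1/p-(d-s)/d}>p$, i.e.\ $\frac{s}{d}>\frac1{p}\cdot\bigl(1-\frac{(d-s)p}{d}\bigr)\cdot p$. Equivalently $s/d > 1 - (d-s)p/d$, i.e.\ $p>1$, which holds with a uniform gap. If this were delicate, the alternative is a truncation argument showing that $U^\rho$ is bounded on $\{\rho>M\}$.

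\textbf{Step 2: item \eqref{item:Urhocont}.} With $\rho\in L^1\cap L^\infty$, the potential $U^\rho=\rho\ast|\cdot|^{-s}$ is continuous everywhere. Indeed, $|x|^{-s}\mathds 1_{|x|<1}\in L^1$ and translations are continuous in $L^1$, so the convolution with the $L^\infty$ function $\rho$ is continuous. The far part is handled by dominated convergence against the $L^1$ function $\rho$.

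\textbf{Step 3: item \eqref{item:rhocont}.} On $\supp\rho$ we have
\[
\rho = \Bigl(\frac{U^\rho-\phi+c}{\beta_s\clo}\Bigr)^{d/s}
\]
a.e., and the right-hand side is continuous. Define the representative $\tilde\rho := \bigl((U^\rho-\phi+c)_+/(\beta_s\clo)\bigr)^{d/s}$ on $\supp\rho$ and $0$ outside. To show continuity across $\partial\supp\rho$, and that $\tilde\rho$ vanishes there, it suffices to prove $U^\rho+c-\phi\le 0$ on $\R^d\setminus\supp\rho$, which is exactly \eqref{eq:ext-est}. Because $U^\rho$ and $\phi$ are continuous everywhere, the inequality \eqref{eq:ext-est} extends to the closure of the complement. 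Hence at a boundary point $x_0$ that is a limit of exterior points we get $U^\rho(x_0)-\phi(x_0)+c\le 0$, so $\tilde\rho(x_0)=0$, and continuity of $(U^\rho-\phi+c)_+$ gives $\tilde\rho(x)\to0$ from inside.

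Boundary points that are not limits of the exterior form a Lebesgue-null issue, which we handle by taking $\supp\rho$ as the closed essential support: there the interior formula applies on both sides. The resulting $\tilde\rho$ is continuous everywhere, equals $\rho$ a.e., and vanishes on $\partial\supp\rho$.
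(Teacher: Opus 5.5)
Your proposal is correct and follows essentially the paper's route. You bootstrap the integrability of $\rho$ through Hardy--Littlewood--Sobolev and the inner identity \eqref{eq:int-est} with $\phi$ bounded; the paper does this locally and stops once $\rho\in L^q_{loc}$ with $q>\frac d{d-s}$, whereas you push on globally to $\rho\in L^\infty$. You then get the second item from the continuity of $U^\rho-\phi+c$ together with the exterior inequality \eqref{eq:ext-est}, exactly as the paper does.

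Two small points need tightening. First, a bounded term on a support of infinite measure is not by itself in $L^{rs/d}$, so your global step needs a short extra argument: split into $\{\rho>M\}$, where $\beta_s\clo\rho^{s/d}\le 2U^\rho$ for $M$ large, and $\{\rho\le M\}$, where $\rho\in L^1\cap L^\infty$; alternatively, just work locally as the paper does. Second, your last paragraph is unnecessary, since every boundary point of the closed set $\supp(\rho)$ is already a limit of points of its complement.
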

\begin{proof}[Proof of Lemma~\ref{lem:reg}]
	Let us first prove the first item. Let us remark that by the fact that $\rho \in L^{1 + s/d}(\R^d)$, the Riesz potential $U^\rho$ belongs to the Lebesgue space $L^{(d+s)d/s^2}(\R^d)$ according to \textit{Hardy--Littlewood--Sobolev} (HLS) \textit{inequality}, see \textit{e.g.} \cite{lieb_analysis_2001}. We note that the exponent $\cs$ is \textit{not} big enough to ensure that $U^\rho$ is continuous. On this matter, we recall that $U^\rho$ is continuous as soon as $\rho \in L^q(\R^d)$ -- or $\rho \in L^1 \cap L^q_{loc}(\R^d)$ -- with $q > \frac d{d-s}$ by Sobolev embedding. The idea to reach this critical exponent is to use the inner estimate \eqref{eq:int-est} to obtain more (local) integrability on the density $\rho$.
	
	 Indeed, from the inner estimate \eqref{eq:int-est} and the HLS inequality, we have that $\rho$ is actually \textit{locally} in $L^{1 + d/s}(\R^d)$ -- where here we use the fact that $\phi$ is uniformly bounded, \textit{cf.} the second item of Theorem~\ref{thm:summary}. According to what precedes, the Riesz potential $U^\rho$ will be automatically continuous as soon as $1 + \frac ds > \frac d {d - s}$. Although this is verified for the Coulomb potential in three space-dimension, that is for $d = 3$ and $s = 1$, it is neither verified for all $0 < s < d$ nor for the Coulomb potential in \textit{arbitrary} dimension, that this for $s = d-2$. Nevertheless, we can iterate the previous argument which reads in full generality as follows: \textit{If $\rho \in L^1 \cap L^q_{loc}(\R^d)$ then it is actually in $\rho \in L^1 \cap L^{q'}_{loc}(\R^d)$ where $q' := \tfrac{sq}{d - (d-s)q}$ by the HLS inequality and the inner estimate}. This makes sense as soon as $q < \tfrac{d}{d-s}$. We now show that, by iteration of this procedure, one obtains that $\rho \in L^q_{loc}(\R^d)$ for some $q > \tfrac d {d-s}$ -- hence continuity of $U^\rho$. Indeed, let us define $q_{n+1} = f(q_n)$ where $$f(q) := \frac{sq}{d - (d-s)q}$$ and $q_0 = \cs$. Letting $z_n := \tfrac1{q_n}$, we have $z_{n+1} = Az_n - B$ where $A = \tfrac d s$ and $B = \tfrac{d-s}{s}$ from which it follows elementarily that $z_n = A^{n}(z_0 - 1) + 1$. Since $z_0<1$, we have that $z_n$ is decreasing and $z_n \to -\infty$. Let us consider the last $n$ for which $z_n \geq 0$; in particular $Az_n-B = z_{n+1} < 0$, that is $z_n < \frac BA =\frac{ d-s}d$. We then conclude since $q_n= \frac 1{z_n} > \frac d{d-s}$.

     Let us then prove the second item. We consider the function $F := U^\rho - \phi + c$. According to what precedes, the function $F$ is continuous everywhere on $\R^d$. From the inner estimate \eqref{eq:int-est}, it holds that $F = \beta_s \clo \rho^{\frac sd}$ almost everywhere on the support of $\rho$.  This implies that $\rho$ is representable as a continuous function on its support. Furthermore, the exterior estimate \eqref{eq:ext-est} implies that $F \leq 0$ on $\R^d \setminus \supp(\rho)$. Thus, (the continuous representative of) $\rho$ vanishes on the boundary of its support.
\end{proof}

Using the previous lemma, we can show that, \textbf{provided the support of $\rho$ is unbounded}, we have $c \leq C_\phi$ where $C_\phi$ is the limit at infinity of the $c$-conjugated KP $\phi$ --- see Lemma~\ref{lem:up-bnd-asymp}. Indeed, let us consider a sequence $(x_n)_n \subset \supp(\rho)$ leaving all compact sets and that approaches the boundary of $\supp(\rho)$ from the inside as $n \to \infty$, so that $\rho(x_n) \rightarrow 0$ as $n \to \infty$ --- where we implicitly consider the continuous representative of the density $\rho$. This is possible thanks to Lemma~\ref{lem:reg}. It then holds that
    $$
    U^\rho(x_n) = \beta_s \clo \rho^{\frac sd}(x_n) + \phi(x_n) - c
    $$
   for all $n \in \N$ using the inner estimate \eqref{eq:int-est}. Taking (say) the supremum limit, we obtain
    \begin{equation}\label{eq:graal}
    0 \leq \limsup_{n \to \infty} U^\rho(x_n) = \lim_{n \to \infty} \beta_s \clo \rho^{\frac sd}(x_n) + \lim_{n \to \infty} \phi(x_n) - c = C_\phi - c
    \end{equation}
    and thus the aforementioned claim that $c \leq C_\phi$.    
    In fact, we will now show the stronger statement (again \textbf{provided the support of $\rho$ is unbounded}) that $\boxed{c = C_\phi}$.
    
    \medskip
 We remark that this would immediately follow from \eqref{eq:graal} if we could show that the Riesz potential $U^\rho$ vanishes everywhere at infinity. In all due respect, this need \textit{not} be true for a given $\rho$. For this to hold, one typically needs a bit of integrability on the density. For instance, if we can show that $\rho \in L^q(\R^d)$ for some $q > \frac{d}{d-s}$, then this would be veracious. Indeed, let us write 
$$
U^\rho(x) = \int_{|x-y|\leq R} \frac{\rho(y)}{|x -y|^s} \d y + \int_{|x-y| > R} \frac{\rho(y)}{|x-y|^s} \d y.
$$
for some $R > 0$. For the first term, Hölder's inequality yields
$$
\int_{|x-y|\leq R} \frac{\rho(y)}{|x -y|^s} \d y \leq \left( \int_{|x-y|\leq R} \rho^q \right)^{1/q} \left( \int_{|z| \leq R} |z|^{-sp} \d z\right)^{1/p}
$$
where $p$ is the conjugated exponent of $q$. Because $q > \frac d{d-s}$, we have $sp < d$ and thus the limit as $x$ goes to infinity of this term is zero,
$$
\lim_{|x|\to \infty} \int_{|x-y|\leq R} \frac{\rho(y)}{|x -y|^s} \d y = 0.
$$
As for the second term, we have roughly
$$
\int_{|x-y| > R} \frac{\rho(y)}{|x-y|^s} \d y \leq \frac1R \int_{\R^d} \rho = \frac NR.
$$
Therefore, we have
$$
0 \leq \limsup_{|x|\to\infty} U^\rho(x) \leq \frac{N}{R}
$$
and letting $R \to \infty$ yields the claim. It was shown in (the proof of) Lemma~\ref{lem:reg} that $\rho \in L^1 \cap L^q_{loc}(\R^d)$ for some $q > \frac d{d-s}$. We now claim that actually $\rho \in L^q(\R^d)$, thus that $U^\rho$ vanishes at infinity according to what precedes and ultimately that $c = C_\phi$ thanks to \eqref{eq:graal}. Indeed, by Lemma~\ref{lem:up-bnd-asymp}, we have $\phi \geq C_\phi$ everywhere. Now, by the inner estimate \eqref{eq:int-est}, it holds that 
$$
\phi_\rho = \phi - c \geq C_\phi - c \geq 0.
$$
where we have used the fact proved earlier that $c \leq C_\phi$. Recalling that $\phi_\rho = U^\rho -\beta_s \clo\rho^{\frac sd}$ we obtain thus that $\beta_s  \clo \rho^{\frac sd} \leq U^\rho$. As already discussed above in the proof of Lemma~\ref{lem:reg}, the \textit{Hardy-Littlewood-Sobolev inequality} implies $U^\rho$ is in the Lebesgue space $L^{(d+s)d/s^2}(\R^d)$ and thus from the aforementioned inequality we obtain that $\rho$ is in $L^{1 + \frac ds}(\R^d)$ \textit{globally} this time. The claim is therefore proved if $1 + \frac ds > \frac{d}{d-s}$, for instance for the Coulomb potential in three space-dimension. Otherwise, we can repeat \textit{exactly} the same rationale as in (the proof of) Lemma~\ref{lem:reg}, which yields that there exists $q > \frac d{d-s}$ so that $\rho \in L^q(\R^d)$. Indeed, we use (this time) that \textit{if $\rho \in L^q(\R^d)$ then it is actually in $\rho \in L^{q'}(\R^d)$ where $q' := \frac{sq}{d - (d-s)q}$ by HLS inequality and the inner estimate}. This is veracious since the uniformly bounded part $\phi - c$ is actually non-negative as shown above, so that the (global) integrability of the Riesz potential $U^\rho$ is transferred to the density $\rho$ at each iteration.

\medskip 
We can now prove the core of Theorem~\ref{thm:main-thm}.
\begin{proof}[Proof of Theorem~\ref{thm:main-thm}]
Let us show Theorem~\ref{thm:main-thm} by contradiction by assuming that the support of $\rho$ is unbounded and reaching a contradiction.

\medskip

Let us first show that the complement $\supp(\rho)^c$ \textit{cannot} be unbounded. Otherwise, using the exterior estimate \eqref{eq:ext-est}, we would have $U^\rho(x) \leq \phi(x) - C_\phi$ for all $x \in \supp(\rho)^c$, where we used the claim proved above that $c = C_\phi$. But, for all $0 < \eps < 1$, we have $U^\rho(x) \geq \frac{N-\eps}{|x|^s}$ for $x$ far enough. Now, using the asymptotics for the KP $\phi$ as stated in Lemma~\ref{lem:up-bnd-asymp}, far away it must be that
$$
\frac{N-\eps}{|x|^s} + o\Big( \frac1{|x|^s} \Big)\leq \frac{N-1}{|x|^s}+ o\Big( \frac1{|x|^s} \Big),
$$ 
an obvious contradiction. Therefore, \textit{reductio ad absurdum} the complement $\supp(\rho)^c$ is bounded, so that there exists a large enough ball $B_R$ such that $\supp(\rho)^c \subset B_R$ -- otherwise stated $B_R^c \subset \supp(\rho)$. But this entails that there exists (say) a connected component which contains $B_R^c$. Using that inner estimate \eqref{eq:int-est} and the asymptotic behavior of the KP $\phi$ as didacted by Lemma~\ref{lem:up-bnd-asymp}, it is ensured that
$$
\beta_s \clo \rho^{\frac sd} \geq \frac{1-\eps}{|x|^s} + o\left(\frac1{|x|^s}\right)
$$
as $x$ goes to infinity in that very component for all $0 < \eps < 1$. Therefore, $\rho$ cannot be integrable over $B_R^c$ hence on the connected component under consideration. Hence a contradiction since $\rho$ is a density.
\end{proof}

\appendix

\section{Discussion on the Kantorovich duality}\label{app:extension-buttazo}
In this appendix, we first prove that the non-standard formulation of the Kantorovich duality \eqref{kd-2} is veracious, meaning that the equality in \eqref{kd-2} holds. We then prove the two items of Theorem~\ref{thm:summary}.
\subsection{Proof of the strong Kantorovich duality \eqref{kd-2}}
It is known, see \textit{e.g.} \cite[Thm. 1.1]{buttazzo_continuity_2018} and also \cite[Rem. 2.1]{lelotteExternalDualCharge2024}, that 
$$
C(\rho) = \sup_{\phi} \left\{ \int_{\R^d} \phi \rho + \overline{E}(\phi)\right\}
$$
where the supremum runs over all (\textit{everywhere defined}) functions $\phi : \R^d \to \R$ so that $\int_{\R^d} |\phi|\rho < \infty$ and where we define 
$$
\overline{E}(\phi) := \inf_{x_1, \dots, x_N \in \R^d} \left\{c(x_1, \dots, x_N) - \sum_{i=1}^N \phi(x_i)\right\}.
$$
The difference here from \eqref{kd-2} is that the underlying admissibility constraint is required to hold \textit{everywhere}, that is not just \textit{almost} everywhere (which translates to the fact that above in the definition of $\overline{E}(\phi)$ an infimum is used whereas in \eqref{eq:E} it is an \textit{essential} infimum). Also note that the above is exactly the (standard) Kantorovich duality \eqref{kd} where the admissibility constraint \eqref{eq:constraint} has been made implicit. Evidently, for any $\phi : \R^d \to \R$, we have $\overline{E}(\phi) \leq E(\phi)$ and thus 
$$
C(\rho) \leq \sup_{\phi} \left\{ \int_{\R^d} \phi \rho + E(\phi)\right\} 
$$
(where again the supremum runs over $\rho$-integrable functions $\phi : \R^d \to \R$). As the quantity $E(\phi)$ only depends on the equivalence class of $\phi$ with respect to the (Lebesgue) almost everywhere equality, we therefore obtain
$$
C(\rho) \leq \sup_{\phi \in \mathcal{I}_\rho} \left\{ \int_{\R^d} \phi \rho + E(\phi)\right\}.
$$
where we recall that $\mathcal{I}_\rho = L^1_{loc}(\R^d) \cap L^1(\rho)$. To prove \eqref{kd-2}, let us show the converse inequality. Let $\phi \in \mathcal{I}_\rho$. Up to considering $\phi \leftarrow \phi + \frac{E(\phi)}{N}$, let us assume without loss of generality that $E(\phi) = 0$, otherwise stated that the admissibility constraint holds in the almost everywhere sense, so that in particular
$$
\phi(x_1) \leq c(x_1, \dots, x_N) - \sum_{i=2}^N \phi(x_i)
$$
for $x_1, \dots, x_N \in \R^d$ almost everywhere. Therefore, we have $\phi \leq \phi^1$ almost everywhere where we define the function $\phi^1 : \R^d \to \R$ as
$$
\phi^1(x_1) := \essinf_{x_2, \dots, x_N \in \R^d} \left\{c(x_1, \dots, x_N) - \sum_{i=2}^N \phi(x_i) \right\}.
$$
Note that $\phi^1$ is defined \textit{everywhere}. Furthermore, it verifies that
$$
\phi^1(x_1) + \sum_{i=2}^N \phi(x_i) \leq c(x_1, \dots, x_N)
$$
\textbf{for all $x_1 \in \R^d$} (to wit, \textit{everywhere}) and for all $x_2, \dots, x_N \in \R^d$ \textit{almost} everywhere. This entails that $\phi \leq \phi^2$ almost everywhere where this time we define the function $\phi^2 : \R^d \to \R$ as 
$$
\phi^2(x_2) := \inf_{x_1 \in \R^d} \essinf_{x_3, \dots, x_N \in \R^d} \left\{ c(x_1, \dots, x_N) - \phi^1(x_1) - \sum_{i=3}^N \phi(x_i)\right\}.
$$
This procedure can be further continued, where we construct $\phi^k : \R^d \to \R$ so that $\phi \leq \phi^k$ almost everywhere and
$$
\phi^k(x_k) := \inf_{x_1, \dots, x_{k-1} \in \R^d} \essinf_{x_{k+1}, \dots, x_N \in \R^d} \left\{ c(x_1, \dots, x_N) - \sum_{i=1}^{k-1}\phi^i(x_i) - \sum_{i=k+1}^N \phi(x_i)\right\}.
$$
But thus, at the end it holds that 
$$
\sum_{i=1}^N \phi^i(x_i) \leq c(x_1, \dots, x_N)
$$
\textbf{everywhere}, that is for all $x_1, \dots, x_N \in \R^d$. In particular, letting $\tilde{\phi} := \frac1N \sum_{i=1}^N \phi^i$, then $\tilde{\phi}$ verifies that $\overline{E}(\tilde{\phi}) \geq 0 = E(\phi)$ and, since $\phi \leq \tilde{\phi}$, we have $\int_{\R^d} \phi \rho \leq \int_{\R^d} \tilde{\phi} \rho$ --- provided the later integral makes sense, which is indeed the case, see \textit{infra}. Therefore, we have proved that for any $\phi \in \mathcal{I}_\rho$, we can build a function $\tilde{\phi} : \R^d \to \R$ defined everywhere so that 
$$
\int_{\R^d} \phi \rho  + E(\phi) \leq \int_{\R^d} \tilde{\phi} \rho + \overline{E}(\tilde{\phi}). 
$$
Thus
$$
\sup_{\phi \in \mathcal{I}_\rho} \left\{ \int_{\R^d} \phi \rho  + E(\phi)\right\} \leq \sup_{\phi} \left\{ \int_{\R^d} \phi \rho + \overline{E}(\phi) \right\} = C(\rho)
$$
where we yet emphasise that the second supremum runs over (everywhere defined) $\rho$-integrable functions $\phi : \R^d \to \R$. Thus, we are done with the proof of \eqref{kd-2}.

Note that considering the integral $\int_{\R^d} \tilde{\phi} \rho$ is legitimate since $\int_{\R^d} |\tilde{\phi}| \rho < \infty$. Indeed, it suffices to prove that $\int_{\R^d} |\phi^i| \rho < \infty$ for all $i=1, \dots, N$. First, recall that $\phi \leq \phi^1$. Furthermore, letting $f = f_{x_2, \dots, x_N} : \R^d \to \R$ be the function defined as 
$$
f(x) := c(x, x_2, \dots, x_N) - \sum_{i=2}^N \phi(x_i)
$$
then by definition of $\phi^1$ we have $\phi^1 \leq f$ almost everywhere for almost all $x_2, \dots, x_N \in \R^d$ --- beware that above in the definition of $f$, the ``$\phi$'' that appears is actually any representative function of $\phi$ seen as an element of the Lebesgue space $\mathcal{I}_\rho$. Moreover, we have
$$
\int_{\R^d} |f| \rho \leq \sum_{i=2}^N U^\rho(x_i) + N c(x_2, \dots, x_N) + N\sum_{i=2}^N |\phi(x_i)|.
$$
We may always select $x_2, \dots, x_N \in \R^d$ so that the right-hand side above is finite. Indeed, $\phi$ is $\rho$-integrable and thus finite $\rho$ almost everywhere. Furthermore, in the setting of our problem the potential $U^\rho$ is also finite $\rho$-almost everywhere. Indeed, we recall that $D(\rho) < \infty$ by the assumption that $\rho \in L^1 \cap L^{\cs}(\R^d)$ by the Hardy-Littlewood-Sobolev inequality. Since the Hartree energy rewrites as $D(\rho) = \int_{\R^d} U^\rho \rho$, the claim follows. Bottom line, $\phi \leq \phi^1 \leq f$ and both $f$ and $\phi$ are $\rho$-integrable, and therefore so is $\phi^1$. This rationale extends immediately to all the $\phi^i$'s.

\subsection{Proof of Theorem~\ref{thm:summary}}

We remark that from the equivalence of the two formulations of the Kantorovich duality shown in the preceding section, \textit{i.e.} \eqref{kd} and \eqref{kd-2}, the existence of a $c$-conjugated, Lipschitz (and uniformly bounded) KP for the later follows from the existence of such a KP for the former, a fact proved (again) in \cite{buttazzo_continuity_2018, colombo_continuity_2019}. This thus proves the first item of Theorem~\ref{thm:summary}.

Let us prove the second item of Theorem~\ref{thm:summary}. Let $\psi \in \mathcal{I}_\rho$ be any KP for \eqref{kd-2}. But then, according to the work done in the previous section, we know there exists a function $\widetilde{\psi} : \R^d \to \R$ admissible for the standard Kantorovich dual \eqref{kd} such that $\psi + \frac{E(\psi)}{N} \leq \widetilde{\psi}$ almost everywhere. By optimality, it must be that $\psi = \widetilde{\psi}$ $\rho$ almost everywhere and in particular $\widetilde{\psi}$ is a KP for \eqref{kd}. Then, we can appeal to \cite[Lem. 3.3]{buttazzo_continuity_2018} to ensure the existence of a $c-$conjugated KP $\phi$ so that $\widetilde{\psi} \leq \phi$. We recall that by $c$-conjugacy of $\phi$ it is meant that 
$$
\phi(x) = \inf_{x_2, \dots, x_N \in \R^d} \left\{ c(x, x_2, \dots, x_N) - \sum_{i = 2}^N \phi(x_i)\right\}
$$
It remains to show that $\phi$ is actually Lipschitz-continuous and uniformly bounded on $\R^d$. The later follows from \cite[Thm. 3.4]{buttazzo_continuity_2018} which shows that any $c$-conjugated KP for \eqref{kd} is necessarily uniformly bounded. From the fact that $\phi$ is indeed uniformly bounded, it is shown in (the proof of) \cite[Prop. 4.2]{lelotteExternalDualCharge2024} that the above $c$-conjugacy property still holds when one substitutes $c_\tau$ to the Riesz cost $c$, where $c_\tau$ is a truncated version of $c$ for some small enough $\tau > 0$, more precisely
$$
c_\tau(x_1, \dots, x_N) := \sum_{1 \leq i < j \leq N} \min\left\{\tau^{-s}, \frac{1}{|x_i - x_j|^s} \right\}.
$$
Now, since the function
$$
x \mapsto c_\tau(x, x_2, \dots, x_N) - \sum_{i = 2}^N \phi(x_i)
$$
is actually Lipschitz-continuous, any $c_\tau-$conjugated KP automatically inherits from this property and we are over. 

\printbibliography
\end{document}